\documentclass[10pt,a4paper]{article}
\usepackage[utf8]{inputenc}
\usepackage{amsmath, amsthm, tikz, colonequals, multicol, hyperref}
\usepackage{amsfonts}
\usepackage{amssymb}

\theoremstyle{plain}
\newtheorem{theorem} {Theorem} 
\newtheorem{lemma} [theorem] {Lemma}

\newtheorem{definition} [theorem]{Definition}

\theoremstyle{definition}
\newtheorem{example} [theorem]{Example}
\newtheorem*{example*}{Example}
\theoremstyle{remark}

\newcommand\xqed[1]{%
  \leavevmode\unskip\penalty9999 \hbox{}\nobreak\hfill
  \quad\hbox{#1}}
\newcommand\demo{\xqed{$\dashv$}}

\title{Central binomial coefficients also count $(2431,4231,1432,4132)$-avoiders\thanks{The author was supported by the Austrian Science Foundation FWF, grant P25337-N23.}}
\author{Marie-Louise Bruner
\medskip\\
\texttt{marie-louise.bruner@tuwien.ac.at}\medskip\\
Vienna University of Technology
}
\date{\today}

\begin{document}
\maketitle

\begin{abstract} This short paper is concerned with the enumeration of permutations avoiding the following four patterns: $2431$, $4231$, $1432$ and $4132$.
Using a bijective construction, we prove that these permutations are counted by the central binomial coefficients.
\end{abstract}

The first systematic analysis of permutations avoiding some  patterns was performed by Simion and Schmidt in 1985~\cite{simion1985restricted}.
Since then, the study of permutation patterns has become a bustling field of combinatorics as witnessed, amongst others,  by the monographs by B\'{o}na~\cite{bona_combinatorics_2004} and Kitaev~\cite{kitaev2011patterns}.
One says that a permutation $\tau$ of length $n$ contains a permutation $\pi$ of length $k \leq n$ as a pattern if there is a subsequence of length $k$ of $\tau$ that is order-isomorphic to $\pi$.
For example, the permutation $\tau= 24135$ contains the patterns $\pi=123$ as can be seen by considering the entries $2, 4$ and $5$ of $\tau$. 
However, $\tau$ avoids the pattern $\sigma=321$ since it does not contain a decreasing subsequence of length three.

One goal within the topic of permutation patterns is to find exact enumeration formul{\ae} for permutation classes defined by a finite set of forbidden patterns.
For a finite set $\Pi$ of permutation patterns, we denote by $\mathcal{S}_n(\Pi)$ the set of permutations of length $n$ that avoid all patterns in $\Pi$ and are interested in the cardinality $S_n(\Pi)=|\mathcal{S}_n(\Pi)|$.
In this short paper we describe a permutation class defined by four forbidden patterns of length four for which a very simple enumeration formula can be given.
Indeed, the set of permutations of length $n$ avoiding the patterns $2431$, $4231$, $1432$ and $4132$ simultaneously is counted by the central binomial coefficients:

\begin{theorem}
For all $n \in \mathbb{N}$ the following holds:
\[
S_n(2431,4231,1432,4132)=\binom{2 \cdot (n-1)}{n-1}=b_{n-1}.
\]
\label{CentralBinomial/thm:cbc}
\end{theorem}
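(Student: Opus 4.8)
The plan is to first convert pattern avoidance into a transparent structural condition, then decompose an avoider at its least entry, and finally assemble the pieces into the central binomial count.

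\emph{Step 1: a structural reformulation.} I would begin by observing that the four forbidden patterns are exactly the order types $x\,y\,3\,z$ in which the third entry is the second largest, the last entry $z$ lies below it, and of the first two entries one lies above the third and one below it; which of $x,y$ is the larger, and whether $z$ is above or below $\min(x,y)$, distinguishes the four cases $4132,4231,1432,2431$ and produces no others. Hence an occurrence of one of the four patterns in $\tau$ is precisely an index $k$ that has a larger entry somewhere to its left, a smaller entry somewhere to its left, and a smaller entry somewhere to its right. Therefore $\tau$ avoids all four patterns if and only if \emph{every position of $\tau$ is a left-to-right maximum, a left-to-right minimum, or a right-to-left minimum}. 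This characterization drives everything that follows.

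\emph{Step 2: decomposition at the entry $1$.} Write $\tau=\alpha\,1\,\beta$, where $1$ occupies the position of the least entry. Every position of $\alpha$ has the entry $1$ to its right, so it cannot be a right-to-left minimum, hence by Step 1 it must be a running maximum or a running minimum; thus $\alpha$ is a ``zigzag'' permutation (each entry a running max or running min), and there are exactly $2^{\ell-1}$ of these of length $\ell$. Dually, every position of $\beta$ has $1$ to its left, so it cannot be a left-to-right minimum, hence it must be a left-to-right maximum of $\tau$ or a right-to-left minimum of $\beta$; writing $T=\max\alpha$, this forces the entries of $\beta$ below $T$ to appear in increasing order (they are all right-to-left minima of $\beta$), while the entries above $T$ must form, read left to right, a $321$-avoiding sequence whose initial increasing run is long enough to cover every large entry occurring before the last small entry. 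One checks that, conversely, any such configuration yields an avoider, so avoiders of length $n$ correspond bijectively to the data: a zigzag $\alpha$; a choice of which of the remaining values lie below $T$; and a $321$-avoiding arrangement of the large entries of $\beta$ together with a marker for the length $r$ of its usable initial increasing run and a compatible interleaving of the small entries.

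\emph{Step 3: the enumeration.} These ingredients are essentially independent, so I would encode them by generating functions in a variable $x$ marking $n-1$. The zigzag part contributes a rational factor assembled from $(1-2x)^{-1}$; the small/large split contributes binomial factors $\binom{a+g-1}{a-1}$; and the large-entry part contributes the Catalan series $\mathcal C(x)=\tfrac{1-\sqrt{1-4x}}{2x}$, via the auxiliary fact that $321$-avoiding permutations of $[h]$ whose first $r$ entries are increasing are enumerated by $x^{r}\mathcal C(x)^{\,r+1}$ (a short decomposition of $321$-avoiders). Summing over the split size and using $x\mathcal C^2=\mathcal C-1$, the product telescopes, yielding $\sum_{n\ge1}S_n(2431,4231,1432,4132)\,x^{n-1}=\dfrac{\mathcal C(x)}{2-\mathcal C(x)}=\dfrac{1}{\sqrt{1-4x}}=\sum_{m\ge0}\binom{2m}{m}x^m$, which is the claim. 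Alternatively, the same decomposition can be converted into an explicit bijection with monotone lattice paths in an $(n-1)\times(n-1)$ grid by turning each ingredient into a block of the path; this is presumably the ``bijective construction'' promised in the title, and I would look for the cleanest such encoding rather than routing through generating functions.

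\emph{Main obstacle.} The delicate point is Step 3: choosing the right parameter for the $\beta$-part (the length of the usable initial increasing run of the large-entry pattern and how the small entries interleave with it), proving the Catalan-power enumeration of $321$-avoiders with a prescribed initial increasing run, and verifying that the resulting multi-sum collapses to $(1-4x)^{-1/2}$. Step 1 also needs care, in that one must confirm the stated index condition captures all four patterns and nothing more.
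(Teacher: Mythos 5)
Your argument is correct, but it takes a genuinely different route from the paper. Your Step 1 is right: the four forbidden patterns are exactly the order types in which some position has a larger entry to its left, a smaller entry to its left, and a smaller entry to its right, so avoidance is equivalent to every position being a left-to-right maximum, a left-to-right minimum, or a right-to-left minimum; your decomposition at the entry $1$ then works as described (I checked it reproduces the count $20$ for $n=4$). The paper proceeds quite differently: it inserts the largest element into a shorter avoider, observes that the admissible insertion slots are exactly those to the right of the rightmost entry having smaller entries on both sides, encodes each avoider by the word of successive insertion positions, and counts these words by mapping their integer tails to North--East lattice paths staying strictly below a line $y=x+i$, finishing with Andr\'e's reflection principle and an inductive binomial identity --- no generating functions at all. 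Your Step 3, which you flag as the main risk, does collapse as you predict: with $a$ entries of $\beta$ below $T$ and $g$ above, the interleavings having exactly $r$ large entries before the last small entry number $\binom{a+r-1}{r}$, so the $\beta$-part has generating function $\mathcal C\sum_{r\ge 0}\binom{a+r-1}{r}(x\mathcal C)^r=\mathcal C(1-x\mathcal C)^{-a}=\mathcal C^{\,a+1}$ since $1-x\mathcal C=1/\mathcal C$; the value-split factor $\binom{\ell+a-1}{a}$ then sums over $a$ to $\mathcal C^{\,\ell+1}$, and adding the $\alpha=\emptyset$ term $\mathcal C$ (which needs separate treatment, as $T$ is undefined and $\beta$ is then just an arbitrary $321$-avoider) to $\sum_{\ell\ge1}2^{\ell-1}x^{\ell}\mathcal C^{\,\ell+1}=x\mathcal C^2/(1-2x\mathcal C)$ gives $\mathcal C/(2-\mathcal C)=(1-4x)^{-1/2}$, as claimed. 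What each approach buys: the paper's encoding is elementary and bijective throughout (words, then paths), at the cost of a less transparent description of the permutations themselves; yours gives a clean structural characterization of the class and reduces the count to standard Catalan machinery, at the cost of the auxiliary fact about $321$-avoiders with a prescribed initial increasing run (true, and provable by a first-return/left-to-right-maxima decomposition) and a multi-index sum that must be carried out carefully.
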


In the following, let us denote the set of permutations avoiding the forbidden patterns $2431$, $4231$, $1432$ and $4132$ by $\mathcal{S}$ and the subset of $\mathcal{S}$ consisting of permutations of length $n$ by $\mathcal{S}_n$.

We shall prove this theorem by identifying every permutation in $\mathcal{S}_n$ by a certain \emph{code word} of length $(n-1)$. Later on we count these code words and show that they are precisely enumerated by the central binomial coefficients.
This is done by identifying the non-initial segments of code words with certain lattice paths.

The set of code words corresponding to permutations in $\mathcal{S}$ is defined in the following way:

\begin{definition}
$\mathcal{W} \subseteq \left( \left\lbrace 2, 3, \ldots\right\rbrace \cup \left\lbrace B, E \right\rbrace \right)^\mathbb{N} $ 
is the set of words $W=w_1 w_2 \ldots$ fulfilling the following conditions:
\begin{enumerate}
\item $w_i \in \left\lbrace B, E \right\rbrace \cup \left\lbrace j \in \mathbb{N}: 2 \leq j \leq i \right\rbrace$,
\item if $w_i \neq B, E$ then $w_{i+1} \neq B, E$,
\item if $w_i \neq B, E$ then $w_{i+1} \geq w_i$.
\end{enumerate}
We shall denote by $\mathcal{W}_n$ the set of all code words of length $n$ and by $W_n$ its cardinality.
\label{CentralBinomial/def:codewords}
\end{definition}

In a code word, the letters $B$ and $E$ stand for \emph{beginning} and \emph{end}, respectively.
Integers in a code word represent positions in the corresponding permutation. 
The precise meaning of this will become clear in the proof of the following Lemma~\ref{CentralBinomial/lem:code_words_and_perms}.

Form the definition above it follows that code words contain an \textit{initial segment} of letters $B$ and $E$ (conditions 1 and 3).
This initial segment is followed by a non-decreasing sequence of not too large integers (conditions 1 and 2).

\begin{example}
The six code words of length two are: $BB$, $BE$, $EB$, $EE$, $B2$, $E2$.
For an example of a code word of length $8$, see Figure~\ref{CentralBinomial/fig:example_path} on page~\pageref{CentralBinomial/fig:example_path}.
\demo
\end{example}

\begin{lemma}
There is a bijection between code words in $\mathcal{W}_{n-1}$ and permutations in $\mathcal{S}_n$.
\label{CentralBinomial/lem:code_words_and_perms}
\end{lemma}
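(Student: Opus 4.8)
The plan is to read off the code word of a permutation $\pi\in\mathcal{S}_n$ by scanning $\pi$ from left to right, after first extracting the combinatorial content of the four forbidden patterns.

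The first step is a structural characterization: $\pi\in\mathcal{S}_n$ if and only if every entry of $\pi$ is a left-to-right maximum, a left-to-right minimum, or a right-to-left minimum. To see this, checking the four permutations of length four shows that $\pi$ contains one of $2431,4231,1432,4132$ exactly when there are positions $a<b<c<d$ with $\min(\pi_a,\pi_b)<\pi_c<\max(\pi_a,\pi_b)$ and $\pi_d<\pi_c$, and the same two inequalities describe every occurrence of these patterns in an arbitrary $\pi$. The existence of such $a,b$ is equivalent to $\pi_1,\dots,\pi_{c-1}$ containing both an entry below $\pi_c$ and an entry above it, i.e.\ to $\pi_c$ being neither a left-to-right maximum nor a left-to-right minimum; the existence of such a $d$ is equivalent to $\pi_c$ not being a right-to-left minimum. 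Hence $\pi$ contains a forbidden pattern precisely when some entry is of none of the three types.

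For the bijection itself I would build $\pi$ one entry at a time from the left. Writing $\pi^{(k)}$ for the permutation order-isomorphic to $\pi_1\cdots\pi_k$ (each $\pi^{(k)}$ again lies in $\mathcal{S}$, since deleting trailing entries preserves pattern avoidance), $\pi^{(k+1)}$ arises from $\pi^{(k)}$ by appending an entry of some rank $r_k\in\{1,\dots,k+1\}$. Call an entry of $\pi^{(k)}$ \emph{internal} if it is a right-to-left minimum but neither a left-to-right maximum nor a left-to-right minimum, and let $m_k$ be the rank of the largest internal entry, with $m_k=0$ if there is none. The local step driving everything is: given $\pi^{(k)}\in\mathcal{S}$, the appended permutation $\pi^{(k+1)}$ lies in $\mathcal{S}$ if and only if the new entry exceeds every internal entry of $\pi^{(k)}$, that is $r_k>m_k$; this follows from the characterization, since appending a larger entry keeps all old internal entries right-to-left minimal, whereas appending a smaller one destroys the right-to-left minimality of the largest internal entry without turning it into a left-to-right extreme. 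One then checks the easy companion facts that once $m_k$ becomes positive it stays positive and is weakly increasing, and that $m_{k+1}=m_k$ when $\pi_{k+1}$ is a left-to-right maximum while $m_{k+1}=r_k$ when $\pi_{k+1}$ is internal. I would then set $w_k=B$ if $\pi_{k+1}$ is a left-to-right minimum with $m_k=0$, set $w_k=E$ if $\pi_{k+1}$ is a left-to-right maximum with $m_k=0$, and set $w_k=m_{k+1}$ otherwise; the bounds $2\le m_{k+1}\le k$ (rank $1$ is always a left-to-right minimum and rank $k+1$ always a left-to-right maximum), together with the persistence and monotonicity of $m_k$, show that $w_1\cdots w_{n-1}\in\mathcal{W}_{n-1}$.

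It remains to invert this. Given $W\in\mathcal{W}_{n-1}$ one decodes left to right, maintaining a number $m$ (initially $0$): a letter $B$ appends a new minimum, a letter $E$ appends a new maximum, and a letter $j$ appends an entry of rank $j$ and resets $m:=j$ if $j$ exceeds the current $m$, while it appends a new maximum and leaves $m$ unchanged if $j$ equals the current $m$. The second and third conditions in the definition of $\mathcal{W}$ are exactly what is needed to guarantee that at every step the appended entry exceeds all internal entries, so the decoded permutation never leaves $\mathcal{S}_n$, and by construction this decoding is inverse to the encoding above, which yields the bijection. The main obstacle is this interface between syntax and combinatorics: proving the structural characterization, and then showing that appending an entry preserves membership in $\mathcal{S}$ precisely under the inequality $r_k>m_k$ and that $m_k$ evolves monotonically, so that conditions 2 and 3 on code words translate exactly into pattern avoidance. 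Everything else is routine checking.
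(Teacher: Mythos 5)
Your proof is correct, but it runs the construction in the direction opposite to the paper's. The paper grows $\pi$ by inserting the values $2,3,\dots,n$ one at a time (the restrictions $\pi|_{[i]}$) and lets the letter $w_i$ record the \emph{position} $p(\pi|_{[i+1]})$ of the rightmost entry having a smaller entry on each side (the ``3'' of a $132$ or $231$); you grow $\pi$ by appending positions (the prefixes $\pi^{(k)}$) and let $w_k$ record the \emph{value} of the largest entry that is a right-to-left minimum without being a left-to-right extreme. Since the pattern set $\{2431,4231,1432,4132\}$ is closed under inversion ($2431^{-1}=4132$, $4231^{-1}=4231$, $1432^{-1}=1432$), your statistic $m_k$ on $\pi$ coincides with the paper's statistic $p$ on $\pi^{-1}$, so the two generating trees are isomorphic and you land in the same word set $\mathcal{W}$ --- though the resulting bijections differ as maps (for the paper's running example $245178396$ your encoding gives $EEBEE336$ rather than $BE233568$, both legal code words). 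What your route buys is the clean structural lemma that $\pi\in\mathcal{S}$ iff every entry is a left-to-right maximum, a left-to-right minimum, or a right-to-left minimum; this makes the ``allowed extensions'' step a one-line consequence (appending on the right cannot change left-to-right statuses and can only destroy right-to-left minimality), whereas the paper argues by a case analysis of where new $132$ or $231$ occurrences can arise. Your verification of the three code-word conditions, and the explicit decoding that inverts the encoding, are complete; I checked in particular that the case $w_k=j$ with $j$ equal to the current $m$ (append a new maximum) and $j>m$ (append an internal entry of rank $j$) are exactly the two preimages an integer letter can have, so injectivity and surjectivity both go through.
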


The approach used to prove the lemma above can be seen as a \textit{generating trees} approach. 
This is a technique introduced in the study of Baxter permutations and systematized by Julian West \cite{west1996generating} in the context of pattern avoidance. 
It is frequently used for the enumeration of permutation classes, see e.g.\ \cite{bousquet2003four}.
However, we decided not to introduce the generating trees terminology here:
The following arguments and especially the enumeration of code words can be done in a bijective manner and do not require the use of rewriting rules and of the associated generating functions.

\begin{proof}%
Given a permutation $\pi$ in  $\mathcal{S}_n$, we can associate with it the sequence    $\pi|_{[1]}, \pi|_{[2]}, \ldots \pi|_{[n-1]}, \pi|_{[n]}$, where $\pi|_{[i]}$ is the permutation $\pi$ restricted to the elements in $[i]$. 
Note that if $\pi$ is in $\mathcal{S}$, all $\pi|_{[i]}$ are as well.
Thus, a permutation $\pi$ in  $\mathcal{S}_n$ is created by inserting the element $n$ in some \emph{allowed position} in the permutation $\pi|_{[n-1]}$.
In the following, we will use this observation in order to identify every permutation $\pi|_{[i]}$, $i \in [n]$, with an element of the alphabet $\left\lbrace 2, 3, \ldots\right\rbrace \cup \left\lbrace B, E \right\rbrace$ that encodes where the largest element of $\pi|_{[i]}$ was inserted in $\pi|_{[i-1]}$.
This will be done in such a way that the resulting word bijectively encodes the permutation $\pi$.

For this purpose, let us consider a permutation $\pi$ of length $n-1$ that lies in $\mathcal{S}$.
Where are we allowed to insert the element $n$ without creating one of the forbidden patterns?
\begin{itemize}
\item If $\pi$ contains a $132$-pattern on the elements $abc$, then the element $n$ may not be inserted to the left of $a$ since this would create a $4132$-pattern and not between $a$ and $b$ since this would create a $1432$-pattern. 
In other words, the element $n$ must be inserted somewhere to the right of $b$.
\item Similarly, if $\pi$ contains a $231$-pattern on the elements $abc$, the element $n$ must be inserted somewhere to the right of $b$. Otherwise a $4231$- or a $2431$-pattern would be created.
\end{itemize}
Thus the position of the rightmost  element $b$ that plays the role of the 3 in a $132$- or a $231$-pattern in $\pi$ tells us where the element $n$ may be inserted without creating any of the forbidden patterns.
Indeed, if the element $b$ is at the $i$-th position in $\pi$, the element $n$ can be inserted at any of the positions $i+1$ up to $n$.

\begin{example}
In the permutation $\pi= 2413$ the elements $241$ form a 231-pattern and the elements $243$ form a 132-pattern. 
In both patterns the element 4 plays the role of the 3 and thus all positions to the right of 4 are allowed.
Indeed, we can insert 5 at the third, fourth and fifth position in $\pi$ and obtain the following permutations in $\mathcal{S}_5$: 24513, 24153 and 24135.
\demo \end{example}

Since this position will be the key to describing a permutation in $\mathcal{S}$ by a code word in a unique way, let us set up the following notation:
\[
p(\pi) \colonequals 
\max \left\lbrace i \in [n]: \exists j < i < k \text{ s.t. } \pi(j) < \pi(i) \text{ and } \pi(k) < \pi(i) \right\rbrace,
\] 
where we set $\max(\emptyset) \colonequals 0$.


Now, given a permutation $\pi$ and its associated sequence of permutations $\pi|_{[2]}, \ldots, \pi|_{[n]}$, we identify it with a word $w=w_1  \ldots w_{n-1}$ on the alphabet $\left\lbrace 2, 3, \ldots\right\rbrace \cup \left\lbrace B, E \right\rbrace$ in the following way. For all $i \in [n-1]$ we define:
\[
w_i 
\colonequals 
\begin{cases}
p ( \pi|_{[i+1]} )  & \text{if } p(\pi|_{[i+1]}) \neq 0 \\
B & \text{if } p(\pi|_{[i+1]}) = 0 \text{ and } \\
&(i+1) \text{ lies at the beginning of } \pi|_{[i+1]} \\
E & \text{if } p(\pi|_{[i+1]}) = 0 \text{ and } \\
&(i+1) \text{ lies at the end of } \pi|_{[i+1]} \\
\end{cases}
\]

\begin{example}
Consider the permutation $\pi=245178396$ in $\mathcal{S}_9$. As shown in the table on the left hand side of Figure~\ref{CentralBinomial/fig:example_path} on page~\pageref{CentralBinomial/fig:example_path}, it can be identified with the word $w=BE233568$ in $W_8$.
\demo \end{example}

Let us note that two distinct permutations can never be identified with the same code word.
Indeed, if the permutation $\pi|_{[i]}$ is given and if we know that $p(\pi|_{[i+1]})$ is equal to some element in $\left\lbrace 2, 3, \ldots\right\rbrace \cup \left\lbrace B, E \right\rbrace$, there is only a single possibility for placing the element $(i+1)$ in $\pi|_{[i]}$ in order to obtain $\pi|_{[i+1]}$.
Thus the map that sends an element in $\mathcal{S}_n$ to a word of length $n-1$ is injective.

Now we need to show that this map actually associates permutations in $\mathcal{S}$ with \textit{code words} as described in Definition~\ref{CentralBinomial/def:codewords}.
The first condition of code words is clearly fulfilled: First, $w_1=B$ for $\pi|_{[2]}=21$ and $w_1=E$ for $\pi|_{[2]}=12$.
Second, if $w_i \neq B,E$ we have $2 \leq w_i \leq i$ since $2 \leq p(\pi|_{[i+1]}) \leq i$.
In order to show that the second and third condition are also fulfilled we need to identify at which positions new $132$- or $231$-patterns can be created when the element $i$ is inserted in a permutation in $C_{i-1}$.
\begin{itemize}
\item If the element $i$ is inserted at some position $j$ with $1<j<i$ then it automatically creates a 132- or 231-pattern in which the element $i$ plays the role of the 3. 
Thus, irrespective of the value of $w_{i-1}$, we have $p(\pi|_{[i]})=j$.
Since $i$ can only be inserted into $\pi|_{[i-1]}$ to the right of the $p(\pi|_{[i-1]}$-th position, we always have that $p(\pi|_{[i]})=w_i > p(\pi|_{[i-1]})$.
\item If the element $i$ is inserted at the beginning or at the end of the permutation, no new 132- or 231-patterns are created. 
Thus if there were no occurrences of such patterns before, we have that $w_{i+1}$ is equal to $B$ or $E$, depending on whether $i$ is inserted at the beginning or at the end.

However, if $p(\pi|_{[i-1]}) \neq 0$, that is if $w_{i-1}$ is neither $B$ nor $E$, we have $p(\pi|_{[i]}) = p(\pi|_{[i-1]})+1$ if the element $i$ is inserted at the beginning and $p(\pi|_{[i]}) = p(\pi|_{[i-1]})$ if it is inserted at the end.
Together with the fact that $w_i \neq B,E$ whenever $i$ is not inserted at the first or last position, this implies that the second condition of code words is fulfilled.
Furthermore, we also have in this case that $w_i \geq w_{i-1}$ whenever $w_{i-1}$ is an integer and thus the third condition of code words is also fulfilled.
\end{itemize}

Finally, it remains to show that this map is surjective.
Given a code word $w$ of length $n-1$ it is straightforward how to construct the corresponding permutation $\pi$ in a recursive manner.
Start with the element 1 and then place the elements 2 up to $n$ one after the other at the positions prescribed by $w$: if $w_i=B$ the element $i+1$ is placed at the beginning of the permutation, if $w_i=E$ it is placed at the end of the permutation and if $w_i=j$ for some integer $2 \leq j \leq i$ it is placed at the $j$-th position of the permutation. 
That the resulting permutation $\pi$ is an element of $\mathcal{S}$ and avoids all forbidden patterns follows from the discussion at the beginning of this proof.
\end{proof}

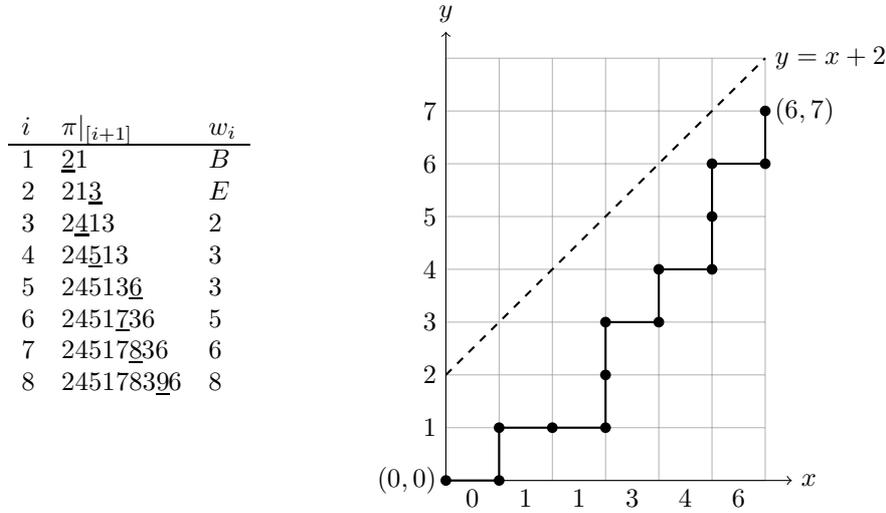
\begin{figure}
\begin{minipage}{0.3\textwidth} 
\centering
$\begin{array}{lll}
i & \pi|_{[i+1]} & w_i \\ \hline 
1 & \underline{2}1 & B \\ 
2 & 21\underline{3} & E\\
3 & 2\underline{4}13 & 2 \\
4 & 24\underline{5}13 & 3 \\
5 & 24513\underline{6} &3 \\
6 & 2451\underline{7}36 & 5\\
7 & 24517\underline{8}36 & 6\\
8 & 2451783\underline{9}6 & 8\\
\end{array}$
\end{minipage}
\hfill
\begin{minipage}{0.6\textwidth}
\centering
\begin{tikzpicture}[scale=0.7]

  	\tikzstyle{axes}=[]
	\tikzstyle{1}=[circle, fill=black, inner sep=0.05cm]

 \draw[style=help lines,draw=black!30,step=1cm] (0,0) grid (6,8);
 
\begin{scope}[style=axes]
    \draw[->] (0,0) -- (6.5,0) node[right] {$x$};
    \draw[->] (0,0) -- (0,8.5) node[above] {$y$};
\end{scope}

\draw[thick, dashed] (0,2) -- (6,8) node[right] {$\textcolor{black}{ y=x+2}$};

\draw[thick]    (0,0) node[1] {}
 -- (1,0) node[1] {}
 -- (1,1) node[1] {}
 -- (2,1) node[1] {}
 -- (3,1) node[1] {}
 -- (3,2) node[1] {}
 -- (3,3) node[1] {}
 -- (4,3) node[1] {}
 -- (4,4) node[1] {}
-- (5,4) node[1] {}
-- (5,5) node[1] {}
-- (5,6) node[1] {}
-- (6,6) node[1] {}
-- (6,7) node[1] {};

\node[right] at (6,7) {$(6,7)$};
\node[left] at (0,0) {$(0,0)$};

\foreach \x/\y in { 0/0, 1/1, 2/1, 3/3, 4/4, 5/6}
    \node[below] at (\x +0.5,0) {$\y$};

\foreach \x in { 1,2,3,4,5,6,7}
    \node[left] at (0,\x ) {$\x$};
    
\end{tikzpicture}
\end{minipage}
\caption{The permutation $\pi=245178396$ in $\mathcal{S}_9$ can be identified with the word $w=BE233568$ in $W_8$ as shown in the table on the left hand side.
The sequence of integers that remains when the initial segment $BE$ has been removed from $w$  and when all integers are decreased by $2$ is $011346$.
It can be represented as the lattice path shown on the right-hand side that starts at $(0,0)$ and ends at $(6,7)$ without ever touching the dashed line.}
\label{CentralBinomial/fig:example_path}
\end{figure}

\begin{lemma}
For all $n \in \mathbb{N}$ the following statement holds:
\[
W_n = \binom{2n}{n}.
\]
\end{lemma}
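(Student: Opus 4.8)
The plan is to set up a bijection between code words of length $n$ and lattice paths from $(0,0)$ to $(n,n+1)$ that use unit east and north steps and never touch the line $y = x+2$, and then count those paths by a reflection argument. First I would unpack the structure of a code word $W = w_1 \ldots w_n \in \mathcal{W}_n$ according to Definition~\ref{CentralBinomial/def:codewords}: it consists of an \emph{initial segment} of some length $\ell$ (with $0 \le \ell \le n$) consisting entirely of letters from $\{B,E\}$, followed by a \emph{tail} of length $n - \ell$ that is a non-decreasing sequence of integers $w_{\ell+1} \le w_{\ell+2} \le \cdots \le w_n$ with each $w_i$ satisfying $2 \le w_i \le i$. (When $\ell = n$ the tail is empty; condition~2 forces that once an integer appears, all subsequent letters are integers, so this really is the only shape available.)

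Next I would encode such a word as a lattice path as suggested by Figure~\ref{CentralBinomial/fig:example_path}. Reading the word left to right, each of the $\ell$ letters in the initial segment contributes a step — I would let $B$ be a north step and $E$ an east step (or vice versa); since there are no constraints among $B,E$ other than position, these first $\ell$ steps are an arbitrary lattice path of length $\ell$. For the tail, decrease every integer by $2$, obtaining a weakly increasing sequence $0 \le a_{\ell+1} \le \cdots \le a_n$ with $a_i \le i - 2$; this is exactly the data of a lattice path: at horizontal coordinate $x$ the path has height $a_{\ell+1+x}$, so each value records how many north steps precede the $(x{+}1)$-st east step. The constraint $a_i \le i-2$, together with the shift caused by the $\ell$ initial steps, translates into the staircase condition that the path stays strictly below the diagonal line $y = x + 2$ after the initial segment. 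The key bookkeeping point is that the initial segment, being unconstrained, can be glued on front; I would check that the combined object is precisely a lattice path of total length $2n$ — wait, I should be careful here. Let me instead count directly: a word in $\mathcal{W}_n$ is determined by choosing where the initial segment ends, the $\{B,E\}$-pattern on it, and the non-decreasing bounded integer tail, and I claim this data is equinumerous with lattice paths from $(0,0)$ to $(n, n+1)$ avoiding $y = x + 2$, i.e.\ staying weakly below $y = x+1$.

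Then I would count these paths. The number of monotone lattice paths from $(0,0)$ to $(n, n+1)$ is $\binom{2n+1}{n}$. Among these I must subtract those that \emph{touch} the forbidden line $y = x+2$. By the classical reflection principle, reflecting a path across $y = x + 2$ at its first point of contact gives a bijection between the ``bad'' paths and all monotone paths from the reflected start $(-2, 2)$ to $(n, n+1)$, of which there are $\binom{2n+1}{n+3}$ — hmm, but this will not collapse to $\binom{2n}{n}$ on the nose, so I would instead use the cycle-lemma / Catalan-style count appropriate to paths bounded by a line of slope $1$ with the given offsets, or equivalently observe that paths from $(0,0)$ to $(n,n+1)$ weakly below $y = x+1$ are in bijection (by a standard ballot-type argument) with \emph{all} paths to $(n,n)$, giving $\binom{2n}{n}$ directly. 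The cleanest route is to split on the position of the last east step below the diagonal, or to invoke the well-known fact that ballot-type paths ending at $(n, n+1)$ and staying weakly below $y = x+1$ number $\frac{1}{n+1}\binom{2n+1}{n}\cdot(\text{something})$; rather than guess, I would verify the identity by the reflection computation and simplify.

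The main obstacle I anticipate is getting the lattice-path dictionary exactly right — precisely matching the length of the initial $\{B,E\}$ segment, the $-2$ shift on the tail, and the ``never touching $y=x+2$'' condition so that the composite object has the claimed endpoint and boundary, and then confirming that the reflection/ballot count of those bounded paths is literally $\binom{2n}{n}$ rather than a binomial coefficient that merely looks close. Once the bijection to bounded lattice paths is pinned down, the enumeration is a routine reflection-principle exercise; the subtlety is entirely in the encoding and in checking small cases (e.g.\ $W_1 = 2 = \binom{2}{1}$, $W_2 = 6 = \binom{4}{2}$, consistent with the six words $BB, BE, EB, EE, B2, E2$) to make sure no off-by-one error has crept in.
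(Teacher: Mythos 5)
There is a genuine gap here, and it is not just the bookkeeping you flagged: the target object of your proposed bijection is wrong. Lattice paths from $(0,0)$ to $(n,n+1)$ that never touch the line $y=x+2$ are counted (by the very reflection you describe) by $\binom{2n+1}{n}-\binom{2n+1}{n-1}=\binom{2n}{n}-\binom{2n}{n-2}$, which equals $5$ for $n=2$, not $W_2=6$. So no bijection from $\mathcal{W}_n$ to that set can exist, and your fallback claim that such paths are ``in bijection with all paths to $(n,n)$'' is false. The encoding itself also breaks at the gluing step you were worried about: the initial $\{B,E\}$ segment is completely unconstrained ($2^{\ell}$ choices), so if you map it to path steps at the start of a path that must avoid $y=x+2$, words like $BB$ (two north steps, reaching $(0,2)$) are immediately killed by the constraint. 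The ``$2$'' in the line $y=x+2$ of Figure~\ref{CentralBinomial/fig:example_path} is not universal --- it is the length of the initial segment $BE$ in that particular example.

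The correct structure, which the paper uses, is to condition on the length $i$ of the initial segment: the tail then corresponds to a path from $(0,0)$ to $(n-i,n-1)$ avoiding the line $y=x+i$ (the offset depends on $i$), counted by reflection as $\binom{2n-i-1}{n-1}\cdot\frac{i}{n}$, while the initial segment contributes a separate factor $2^i$ rather than being part of the same constrained path. This yields $W_n=\sum_{i=1}^{n}2^i\binom{2n-i-1}{n-1}\frac{i}{n}$, and collapsing this sum to $\binom{2n}{n}$ requires a further identity (proved in the paper by induction); it is not a routine reflection computation. Your plan is missing both the $i$-dependence of the forbidden line and this final summation step, so as written it cannot be completed to a proof.
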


\begin{proof}
First of all, let us note that 
\[
W_n = \sum_{i=1}^{n}2^i \cdot w(n-i,i)
\]
where $i$ corresponds to the length of the initial segment consisting of letters $B$ and $E$ and where $w(n-i,i)$ is the number of words $w=w_1 \ldots w_{n-i}$ of length $(n-i)$ where $2 \leq w_j \leq j+i$ and $w_j \leq w_{j+1}$.
These numbers $w(n-i,i)$ can be easily computed by viewing the words that they count as certain lattice paths.

First, let us remark that we can decrease every integer in such a word by $2$ and counts the number of words  of length $(n-i)$ where $0 \leq w_j \leq j+i-2$ and $w_j \leq w_{j+1}$.
Now we can represent each such word by a lattice path starting at $(0,0)$, ending at $(n-i,n-1)$ and consisting of unit steps to the East or the North as follows:
If $w_j=k$ for some integer $k$, the $j$-th step to the East is at height $k$, i.e., we take a step from $(j-1,k)$ to $(j,k)$.
Since it holds that $w_j \leq w_{j+1}$, the remaining gaps can simply be filled in with North-steps and there is no need for South-steps.
The condition that  $w_j \leq j+i-2$ is easily translated into the condition that the lattice path must always stay below and never touch the line $y=x+i$.

It is now easy to convince oneself that this correspondence between words of length $(n-i)$ where $2 \leq w_j \leq j+i$ and $w_j \leq w_{j+1}$ and North-East-lattice paths from $(0,0)$ to $(n-i,n-1)$ that never touch the line $y=x+i$ is one-to-one.

\begin{example}
For an example of this correspondence, see the path corresponding to $011346$ that is represented on the right-hand side of Figure~\ref{CentralBinomial/fig:example_path}.
\demo
\end{example}

Such lattice paths have been widely studied in the literature and it is shown, for instance in the first chapter of Mohanty's monograph on lattice path counting~\cite{mohanty1979lattice}, that their number is:
\begin{align*}
w(n-i,i)	& = \binom{2n-i-1}{n-1} \cdot \frac{i}{n}
\end{align*}
Since it is so simple and beautiful, let us briefly repeat the argument of the bijective proof here.
It is known as Andr\'{e}s \textit{reflection principle}.
First, let us remark that 
\begin{align}
w(n-i,i) = \binom{2n-i-1}{n-1} - r(n-i,i),
\label{CentralBinomial/eqn:number_paths}
\end{align}
where the binomial coefficient counts all North-East-lattice paths from $(0,0)$ to $(n-i,n-1)$ and $r(n-i,i)$ is the number of such paths that do touch the line $y=x+i$ at some point.
Given such a ``bad'' path from $(0,0)$ to $(n-i,n-1)$, we denote by $X$ the point where it touches the forbidden line for the first time.
From this path we now construct a new path as follows: The part to the left of $X$ is reflected about the line $y=x+i$, thus turning North-steps into East-steps and vice-versa; the part to the right of $X$ stays the same.
The new path is then a path from $(-i,i)$ to $(n-i,n-1)$ and this transformation is a one-to-one correspondence to all such paths.
Therefore, we have
\begin{align*}
w(n-i,i)	& = \binom{2n-i-1}{n-1} - \binom{2n-i-1}{n}, 
\end{align*}
and thus the enumeration formula stated in Equation~\eqref{CentralBinomial/eqn:number_paths}. Finally, this leads to:
\begin{align*}
W_n & = \sum_{i=1}^{n}2^i \cdot \binom{2n-i-1}{n-i} \cdot \frac{i}{n} \\
	& = \frac{1}{n}\cdot \sum_{j=0}^{n-1}2^{n-j} \cdot \binom{n+j-1}{j} \cdot (n-j) \\
\end{align*}
In order to prove that $W_n=\binom{2n}{n}$ let us first remark that a more general statement holds.
It holds that 
\begin{align}
\sum_{j=0}^{n-1}2^{m-j} \cdot \binom{m+j-1}{j} \cdot (m-j) & = n \cdot  2^{m-n+1} \binom{m+n-1}{n}
\label{CentralBinomial/eqn:central_binomial}
\end{align}
for all $m, n \in \mathbb{N}$ as can be showed very easily by induction over $n$ and every fixed $m$.
The induction start for $n=1$ is trivially true since the left hand side of Equation~\eqref{CentralBinomial/eqn:central_binomial} is equal to $2^m \cdot m$ and the right hand side to $2^m \cdot \binom{m}{1}$.
For the induction step, let us assume that Equation~\eqref{CentralBinomial/eqn:central_binomial} has been proven for $n$.
For $n+1$, we have:%
\begin{align*}
& \sum_{j=0}^{n}2^{m-j} \cdot \binom{m+j-1}{j} \cdot (m-j)= \\= & \, n \cdot  2^{m-n+1} \binom{m+n-1}{n} + 2^{m-n}\binom{m+n-1}{n}\cdot (m-n) \\
 = &  2^{m-n} \binom{m+n-1}{n}\cdot \left( 2n +m-n\right) \\
 = &  2^{m-n} \binom{m+n}{n+1}\cdot \frac{n+1}{m+n}\cdot( m+n), \\
\end{align*}
which proves Equation~\eqref{CentralBinomial/eqn:central_binomial} for $n+1$.

Setting $m=n$ in Equation~\eqref{CentralBinomial/eqn:central_binomial} leads to the central binomial coefficients and finishes the proof.
\end{proof}

\paragraph*{Future work}

The proof presented above first constructs a bijection to a certain set of words and then counts these words by establishing a correspondence of certain segments of these words to lattice paths.
However, it remains open to construct a direct bijection to some combinatorial object that is also counted by the central binomial coefficients. 
For instance, it would be nice to have a direct bijection to granny walks of length $n$.
These  are direct routes from the point $(0,0)$ to the point $(n,n)$ in an integer grid.
See the entry of the sequence A000984 in the On-line Encyclopedia of Integer Sequences~\cite{oeis}, for a (non-exhaustive) list of combinatorial objects counted by the central binomial coefficients.

The permutation class studied in this paper is not the only one known to be enumerated by the central binomial coefficients.
Indeed, in \cite{albert2005insertion} the authors describe a general scheme to enumerate permutation classes, the \emph{insertion encoding}.
As an example of a permutation class amenable to this technique, they present the class defined by the forbidden patterns $3124, 4123, 3142 \text{ and } 4132$.
This class is also enumerated by the central binomial coefficients.

In total, there $\binom{24}{4}=10 \, 626$ different permutation classes defined by a set of four forbidden patterns of length four.
However, this number can be reduced significantly by considering symmetry operations on permutations.
Indeed, if $\sigma^r$ denotes the reverse of the permutation $\sigma$, i.e., the permutation $\sigma$ read from right to left, it is clear that a permutation $\tau$ contains the pattern $\pi$ iff the permutation $\tau^r$ contains the pattern $\pi^r$.
The same holds for the complement permutation $\sigma^c$, i.e., the permutation defined by mapping the element $i$ to $n-\sigma(i)+1$, and the inverse permutation $\sigma^{-1}$, i.e., the permutation mapping the element $\sigma(i)$ to $i$.
Considering these operations and compositions of these operations, we obtain $1\, 524$ so-called \emph{trivial Wilf-equivalence classes}.
Preliminary calculations show that, among these $1\, 524$ permutation classes, there are twelve classes that are possibly enumerated by the central binomial coefficients.
Besides the class treated in this paper and the one mentioned by Albert et. al.\ in \cite{albert2005insertion} these classes are defined by the following forbidden patterns:
\begin{multicols}{2}
\begin{itemize}
\item $1234, 1243, 1324, 1342$
\item $1234, 1243, 1342, 1423$
\item $1234, 1243, 1342, 2341$
\item $1243, 1324, 1342, 1423$
\item $1243, 1324, 1342, 1432$
\item $1243, 2143, 2413, 2431$
\item $1324, 1342, 1423, 1432$
\item $1324, 1342, 1432, 4132$
\item $1342, 1423, 1432, 2431$
\item $1342, 2413, 2431, 3142$
\end{itemize}
\end{multicols}

One of these classes can be very easily shown to be enumerated by the central binomial coefficients, namely the one defined by the four base patterns $1324, 1342, 1432$ and $4132$.
A permutation of length $n$ avoids all four of these patterns if it does not contain the pattern $132$ or if it does contain this pattern it is the element $n$ that plays the role of the $3$ in $132$.
Thus, such a permutation can be obtained by inserting the element $n$ in any of the $n$ possible positions in a $132$-avoiding permutation of length $(n-1)$.
Since $132$-avoiders are counted by the Catalan numbers we have the following:
\[
S_n(1324, 1342, 1432, 4132)= n \cdot \frac{1}{n} \binom{2(n-1)}{n-1}=b_{n-1}.
\]

It would be desirable to prove that all these classes are indeed enumerated by the central binomial coefficients. 
Even more so, it would be desirable to develop a general framework that can be used to prove that all these classes are indeed counted by the central binomial coefficients.

\bibliographystyle{abbrv}
\bibliography{Database}

\begin{thebibliography}{1}

\bibitem{albert2005insertion}
M.~H. Albert, S.~Linton, and N.~Ru{\v{s}}kuc.
\newblock The insertion encoding of permutations.
\newblock {\em The Electronic Journal of Combinatorics}, 12(1):31, 2005.

\bibitem{bona_combinatorics_2004}
M.~B\'{o}na.
\newblock {\em Combinatorics of permutations}.
\newblock Discrete Mathematics and Its Applications. Chapman \& {Hall/CRC},
  2004.

\bibitem{bousquet2003four}
M.~Bousquet-M{\'e}lou.
\newblock Four classes of pattern-avoiding permutations under one roof:
  Generating trees with two labels.
\newblock {\em The Electronic Journal of Combinatorics}, 9(2), 2003.

\bibitem{kitaev2011patterns}
S.~Kitaev.
\newblock {\em Patterns in Permutations and Words}.
\newblock Springer, 2011.

\bibitem{mohanty1979lattice}
S.~G. Mohanty.
\newblock {\em Lattice path counting and applications}.
\newblock Academic Press, 1979.

\bibitem{oeis}
The {O}n-{L}ine {E}ncyclopedia of {I}nteger {S}equences.
\newblock Published electronically at
  \href{http://oeis.org}{\texttt{http://oeis.org}}, 2015.

\bibitem{simion1985restricted}
R.~Simion and F.~W. Schmidt.
\newblock Restricted permutations.
\newblock {\em European Journal of Combinatorics}, 6:383--406, 1985.

\bibitem{west1996generating}
J.~West.
\newblock Generating trees and forbidden subsequences.
\newblock {\em Discrete Mathematics}, 157(1):363--374, 1996.

\end{thebibliography}

\end{document}